 \newlength{\boxwidth}
\theoremstyle{plain}
\newtheorem{thm}{Theorem}[section]
\newtheorem*{MT}{Main Theorem}
\newtheorem{cor}[thm]{Corollary}
\theoremstyle{definition}
\newtheorem*{alg2*}{Main Algorithm 2}
\newtheorem*{alg1*}{Main Algorithm 1}
\newtheorem{rem}[thm]{Remark}
\newenvironment{customalg}[1]
{\innercustomalg}
{\endinnercustomalg}
\numberwithin{equation}{section}
\def\RR{{\mathbb R}}
\def\ZZ{{\mathbb Z}}
\def\CC{{\mathbb C}}
\def\QQ{{\mathbb Q}}
\def\HH{{\mathbb H}}
\def\cL{{\mathcal L}}
\DeclareMathOperator{\End}{{End}}
\DeclareMathOperator{\Hom}{{Hom}}
\DeclareMathOperator{\Sp}{{Sp}}
\DeclareMathOperator{\Ima}{{Im}}
\begin{document}

\title[Decomposing abelian varieties: algorithms and applications]{Decomposing abelian varieties into simple factors: algorithms and applications.} 
\author{Rub\'i E. Rodr\'iguez}
\address{Departamento de Matem\'atica y Estad{\'\i}stica, Universidad de La Frontera, Temuco, Chile}
\email{rubi.rodriguez@ufrontera.cl}%

\author{Anita M. Rojas}
\address{Departamento de Matem\'aticas, Facultad de
Ciencias, Universidad de Chile, Santiago\\Chile}
\email{anirojas@uchile.cl }%

\thanks{Partially supported by ANID/CONICYT Fondecyt grants 1230708 and 1230034.}
\subjclass{14K12, 32G20}
\keywords{Abelian varieties, period matrix, Riemann matrix, Group algebra decomposition, Poincar\'e decomposition}%

 \begin{abstract}
 
We give an effective procedure to explicitly find  the decomposition of a polarized abelian variety into its simple factors if a  period matrix is known. Since finding this datum is not easy, we also provide two methods to compute the period matrix for a polarized abelian variety, depending on the given geometric information about it.  

These results work particularly well in combination with our previous work on abelian varieties with group actions, since they allow us to fully decompose such varieties by successively decomposing their factor subvarieties, even when these no longer have  a group action. We highlight that we do not require to determine the full endomorphism algebra of any of the (sub)varieties involved. 

We illustrate the power of our algorithms with two byproducts:  we find a completely decomposable Jacobian variety of dimension $101$, filling this Ekedahl-Serre gap, and  we  describe a new completely decomposable Jacobian variety of CM type of dimension $11$.
\end{abstract}

\maketitle

\section{Introduction}\label{S:Intro}

A period matrix $\Pi=\left( E\; Z\right)$ for a polarized abelian variety $A$, defining the relation between the real and the complex coordinate functions  of its lattice and of its vector space respectively, captures deep geometric information about $A$. For instance, if $A$ is defined over $\overline{\mathbb{Q}}$ and has dimension $g$, then $Z$ is an algebraic point in the Siegel space $\mathbb{H}_g$ if and only if $A$ is of CM type; that is, if and only if the simple factors of $A$ have complex multiplication, see \cite{sw}. As a consequence, period matrices are useful tools to describe loci of moduli spaces of abelian varieties  with interesting geometric or arithmetic properties. 

A criterion in terms of period matrices for a polarized abelian variety to be non-simple is given in \cite[Thm. 4.1]{alrJPAA}. This criterion can be roughly stated as \textit{$A$ is not simple if and only if there is a differential form $\omega \in H^{1,1}(A)$ satisfying certain equations given in terms of the period matrix $\Pi=(E\, Z)$ of $A$}; see Section \ref{S:Beyond} for details.

In this work we further improve this criterion, transforming it into an effective tool to decompose the variety $A$ into a product of subvarieties, and ultimately to find the Poincar\'e decomposition of $A$, in an inductive procedure using the results given here to find the period matrices of abelian varieties. 

Our main result (Theorem \ref{beyond}) in this regard can be summarized as follows.

\begin{MT}\label{T:M}
Let $A$ be a polarized abelian variety of dimension $g$ with period matrix $\Pi = (E\, Z)$. Look for $\omega$ as in \cite[Thm. 4.1]{alrJPAA}. If such an $\omega$ exists, then construct the corresponding subvarieties $A_{\omega}$ and its complement $A_{\omega}^c$ according to Theorem \ref{beyond}, obtaining an isogeny decomposition $A_{\omega}\times A_{\omega}^c\to A$ of $A$.

Next compute the period matrices of $A_{\omega}$ and $A_{\omega}^c$ using Theorem \ref{T:matrizriemannchica}, and apply  the procedure again to both subvarieties. 

This algorithm stops when $A$ is decomposed as a product of simple factors: in its Poincar\'e decomposition.

Notice that from \cite[Thm. 4.1 ]{alrJPAA}, if there is no such differential form $\omega$, then $A$ is simple. 
\end{MT}

In general, it is not easy to find either the simple factors of $A$ or to compute explicitly the Riemann matrix for $A$. Apart from algorithms that  are mostly applicable to the case of Jacobian varieties of special curves, as in \cite{dh, gsst, rr1, rr2, bcgr}, and others that are usually based on a numerical approach for compact Riemann surfaces given as plane algebraic curves over number fields \cite{mascot}. Precise results, like the outputs the algorithms given here produce, have been only given for special families of curves, such as by  Weil, who worked out the case of Lefschetz  surfaces $y^p =  x^a (1-x)$, for $p$ prime and $1 \leq a \leq p-1$,  and by Rohrlich, for the case of Fermat's curves $x^n + y^n = 1$. 

In \cite{brr} we gave the theoretical basis and an algorithm to compute Riemann matrices for Jacobian varieties of compact Riemann surfaces with automorphisms, here we extend that algorithm to the case of polarized abelian varieties with a group action (Theorem \ref{T:achica}).

We recall some motivating questions on the subject that can be tackled by decomposing abelian varieties, or Jacobian varieties in particular. Ekedahl and Serre \cite{ekse} studied completely decomposable Jacobian varieties; that is, Jacobians which are isogenous to a product of elliptic curves. In their theorem, they listed several genera in which there are completelly decomposable Jacobians, the largest being $1297$,  but they left several {\it gaps}. Besides, they asked two questions which remain open: Is it true that for every $g>0$, there is a completely decomposable Jacobian variety of dimension $g$?;  Is there a bound for the genus of a curve with completely decomposable Jacobian?. Currently, the smallest {\it Ekedahl-Serre gap} is $g=38$, according to \cite[3.11]{serre2} and \cite{PA}, and $g=101$ is the first gap after $100$; this gap is filled in Section \ref{S:g101}.

On the other hand, Beauville \cite{beau} points out that few examples of curves with Jacobian variety of maximal Picard number $\rho=h^{1,1}$ are known. We recall \cite[Prop. 3]{beau} this nice characterization of $\rho$-maximal abelian varieties; an abelian variety $X$ of dimension $g$ is $\rho$-maximal if and only if it is isogenous to $E^g$, with $E$ an elliptic curve with complex multiplication. Moreover, this is the case if and only if $X$ is isomorphic to a product of mutually isogenous elliptic curves with complex multiplication, and if and only if the rank of $\text{End}(X)$ over the integers  is $2g^2$. 

Having in mind all these fundamental questions and problems, our original motivation for the work in this manuscript was to compare the Group Algebra Decomposition (GAD)
$$
A \sim B_1^{n_1} \times \ldots \times B_r^{n_r} ,
$$
known to exist for any abelian variety $A$ with the action of a group $G$, 
with the well known Poincar\'e decomposition into simple factors 
$$
A \sim C_1^{k_1} \times \ldots \times C_s^{m_s},
$$
valid for any polarized abelian variety. In general, there is no correspondence between them.

The factors in the Poincar\'e decomposition satisfy $\Hom(C_i,C_j)=0$, whereas in GAD they satisfy  $\Hom_G (B_i^{n_i}, B_j^{n_j}) =0$, for $i\neq j$.  Now we know that it may well happen that two different \textit{primitive factors} $B_i$ and $B_j$ are isogenous, or that a $B_j$ is non-simple, as we will see in the applications in Sections \ref{S:g101} and \ref{S:g11}.

One typical application of our new results is to go beyond the GAD decomposition: given a GAD for $A$, with Theorem \ref{T:achica} first compute the period matrices for the isotypical factors $B_i^{n_i}$. Then, with Theorem \ref{T:matrizriemannchica}, compute the period matrices of the primitive factors $B_i$. Finally, with Theorem \ref{beyond} sketched as Main Theorem above, find the Poincar\'e decomposition of every primitive factor, hence fully decomposing the original variety $A$.

An interesting feature is that, as can be seen from what follows,  a priori knowledge of $\End_{\mathbb{Q}}(A)$ is not required.

In particular, and as a way of illustrating the kind of results that can be found applying the results presented here, we give the following applications.

\begin{cor}\label{P:coro}
	There is a curve of genus $g=101$ with completely decomposable Jacobian variety, thus filling an Ekedahl-Serre gap. 
	
	There is a curve of genus $11$ isogenous to a product of elliptic curves with complex multiplication.
\end{cor}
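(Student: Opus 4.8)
The plan is to prove each statement constructively, by exhibiting a concrete curve with group action and running it through the pipeline summarized in the Main Theorem. Since the two claims are existence statements, it suffices to produce one curve in each case whose Poincar\'e decomposition, computed by the algorithms of the paper, has the desired shape; the explicit worked examples live in Sections \ref{S:g101} and \ref{S:g11}.

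For the genus $101$ assertion, I would first locate a compact Riemann surface $C$ of genus $101$ carrying a nontrivial group $G$ of automorphisms whose Group Algebra Decomposition of $JC$ is as favorable as possible (small isotypical factors). Fixing such a pair $(C,G)$, I would apply Theorem \ref{T:achica} to compute the period matrices of the isotypical factors $B_i^{n_i}$, then Theorem \ref{T:matrizriemannchica} to extract the period matrices of the primitive factors $B_i$. At this point the $B_i$ need no longer carry any group action, so I would invoke Theorem \ref{beyond} to decompose each $B_i$ further, iterating the criterion until every factor is simple. The target is to certify that every simple factor has dimension one, so that $JC$ is isogenous to a product of elliptic curves; this exhibits a completely decomposable Jacobian in dimension $101$ and thereby fills the Ekedahl--Serre gap recorded in the introduction.

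For the genus $11$ assertion the structure is identical, but the verification is strengthened: each one-dimensional simple factor must be an elliptic curve with complex multiplication. Here the period matrices produced by the algorithms do the certifying work. Once a factor is reduced to dimension one, its normalized period entry $Z$ lies in the upper half-plane, and complex multiplication is detected precisely when $Z$ is an algebraic (CM) point, in accordance with the criterion recalled at the start of the introduction. Reading off the $Z$-entries of the explicit elliptic factors thus confirms that the chosen genus $11$ curve is isogenous to a product of CM elliptic curves.

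The main obstacle is the search step rather than the computation. Most curves with automorphisms do \emph{not} decompose into elliptic factors, so one must first identify a candidate $(C,G)$ whose full Poincar\'e decomposition collapses entirely to dimension one --- and, for genus $101$, a candidate with the precise genus needed to fill that specific gap. Once such a curve is in hand, the remaining work is the mechanical (if laborious) execution of Theorems \ref{T:achica}, \ref{T:matrizriemannchica} and \ref{beyond}, followed by the final check that no factor of dimension $\geq 2$ survives in either case, and that each elliptic factor in the genus $11$ example carries complex multiplication.
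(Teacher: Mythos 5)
Your proposal is correct and follows essentially the same route as the paper: in Section \ref{S:g101} the authors take the genus-$101$ curve with $G$ of order $800$ from \cite{PA} (whose GAD already has all factors elliptic except one abelian surface $S$), compute a period matrix for $S$ via Theorem \ref{T:achica}, and split $S$ into two elliptic curves via Theorem \ref{beyond}; in Section \ref{S:g11} they run the full pipeline (Theorems \ref{T:achica}, \ref{T:matrizriemannchica}, \ref{beyond}) on a genus-$11$ curve with an action of the group $(96,28)$ and read off CM from the explicit imaginary quadratic period entries, exactly as you describe. The only point to sharpen is that for the elliptic factors CM is certified by the periods being imaginary quadratic (e.g.\ $i$, $\frac{i\sqrt{2}}{2}$, $\frac{1+i\sqrt{3}}{2}$, $\frac{i\sqrt{6}}{6}$), which is the precise form of the algebraicity criterion in the one-dimensional case.
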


Our methods can be translated into algorithms, one of which is presented in the Appendix, while the rest are available at \cite{rrgithub}, together with all the codes and the full calculations for the applications.

\medskip

The structure of this work is as follows. After some preliminaries given in Section \ref{S:matricesperiodos}, 
in Section \ref{S:achica} we give a method to compute a period matrix for any $G$-invariant subvariety $B$ of an abelian variety $A$ with  action of a group $G$, given the symplectic representation of  the action of $G$ on $A$. First, we describe how to find the restriction of the action of $G$ from $A$ to $B$, and then we use that action to find the period matrix for $B$. This result is applied to find the period matrix of the isotypical factors; these are the subvarieties of $A$ corresponding to images of central idempotents in $\mathbb{Q}[G]$.

This algorithm can be used for $A$ itself, but the computer runs out of memory very fast as the dimension of $A$ grows. So it is better to use it, as said, in combination with the isotypical decomposition, since the isotypical factors $A_j$ are $G$-invariant subvarieties of $A$. This algorithm is an improvement of the one we developed in \cite{brr} for principally polarized abelian varieties. Here we generalize it to any type of polarization, since the induced polarization on a subvariety is not necessarily principal.

The second method is presented in Section \ref{S:matrizriemannchica}, on how to compute a period matrix for the subvariety $A_f=\Ima(f)$ of  a polarized abelian variety $(A=V/L, \cL)$ of dimension $g$, given a period matrix $\Pi=(E\, \, Z)$ for $A$ (with respect to some bases $\alpha$ and $\beta$ for $V$ and $L$ respectively), and $f\in \End_{\QQ}(A)$ represented in the basis $\beta$ as a matrix in $M_{2g}(\QQ)$. Observe that any subvariety of an abelian variety is the image of $A$ under some endomorphism of $A$ (for instance its norm map), so this is a general procedure.

In Section \ref{S:Beyond}, we recall  from \cite{alrJPAA} a characterization for an abelian variety to be simple in terms of its period matrix.  In subsection \ref{SS:nosotras} we put together all the results mentioned earlier with this criterion to obtain the Main Theorem, there called Theorem \ref{beyond}, which may be thought of as an algorithm to compute the Poincar\'e decomposition of an abelian variety into simple factors given its period matrix. 

Sections \ref{S:g101} and \ref{S:g11} contain the proof of Corollary \ref{P:coro}, as a combined application of all the  results.

In Section \ref{S:app} we outline one of the algorithms emerging from our results; the one for finding the Poincar\'e decomposition of an abelian variety. The code for this algorithm and the others in this work are in  \cite{rrgithub}; the reader can also find there  more precise explanations on how to actually implement them in Magma \cite{magma}, as well as the calculations for our applications.

\section{Preliminaries}\label{S:matricesperiodos}

We recall here some known results about decompositions of abelian varieties with a group action; we refer to  \cite{lre,cr2}, \cite[Ch.13]{bl} and \cite{lrb}  for details. Let $A=V/L$ be an abelian variety with the action of a (finite) group $G$;
this action induces an algebra homomorphism 
$$
\rho:\QQ[G]\to \End_{\QQ}(A).
$$

The semi-simple algebra $\QQ[G]$ decomposes as the product of unique simple algebras $\QQ[G]e_j$, where each $e_j$ is the central idempotent corresponding to the rational irreducible representation $W_j$  of $G$, with $j$ in $1, \ldots , r$ indexing a full set of non-equivalent rational irreducible representations of $G$. This induces the so called 
\textit{isotypical decomposition} of $A$, given by (unique) abelian subvarieties $A_1 , \ldots , A_r$ of $A$, with $G$ acting on $A_j$ by (an appropriate multiple of) the rational irreducible representation  $W_j$ of $G$, and such that the sum morphism is a $G$-equivariant isogeny:  
$$
A_1 \times \ldots \times A_r \rightarrow A. 
$$
 
As each \textit{isotypical factor} $A_j$ is described explicitly as the image of $A$ under $\rho(e_j)$ in $\End_{\QQ}(A)$,  we can compute the period matrix of each $A_j$ if the rational representation of the action of $G$ on $A$ is known, according to our second method described in Theorem \ref{T:achica}, which includes how to find the (restricted) action of $G$ on $A_j$.   If, on the other hand, the period matrix of $A$ is known,  then the period matrix of each $A_j$ may be computed using Theorem \ref{T:matrizriemannchica}. The first approach is more common, since the $G$-abelian subvarieties $A_j$ are lower dimensional than $A$, and hence computations are simpler.

Since each simple algebra $\QQ[G]e_j$ can in turn be decomposed as a sum of primitive left ideals, another decomposition is obtained: a \textit{group algebra decomposition} (GAD) of $A$. Its form is
\begin{equation} \label{e:gad}
B_1^{n_1}\times \dots \times B_r^{n_r}\to A,
\end{equation}
where each $B_j$ is a subvariety of $A_j$ and $n_j=\frac{\dim V_j}{m_j}$, with $m_j$ the Schur index of $V_j$ (any complex irreducible component of $W_j\otimes \mathbb{C}$). The factors $B_j$ are called \textit{primitive factors} in the GAD, since they correspond to images of primitive idempotents in $\QQ[G]$. Note that they are not uniquely defined, and different choices for them correspond to different GAD's of $A$ having, for instance, different isogeny degrees.

The starting point of this work is the method in \cite{lroarch}, which allows the computation of the polarization induced on the isotypical factors for the case when $A$ is the Jacobian variety of a curve with group action, if  the rational representation $\rho_r:G\to \Sp_{2g}(\ZZ)$ of the group $G$ is known. In \cite{lrrMathZ}, the method was extended to compute the induced polarization on any subvariety of a polarized abelian variety $A$ with group action (by a group $G$) given as the image of an element of $\QQ[G]$.

In this work we go further, obtaining  the period matrices for any subvariety of $A$ defined as the image of an element $f\in \End_{\QQ}(A)$. In particular, for the primitive factors $B_j$. We explain this in section \ref{S:matrizriemannchica}.

We first recall some notation and well known facts, as described in \cite{bl}. Let $(A=V/L , J_E)$ be a polarized abelian variety (\textit{pav} in what follows), where $J_E$ denotes the polarization considered as an integral alternating matrix on the lattice $L$.  Let $(d_1, \dots , d_g)$ be the type of
the polarization $J_E$; a \textit{symplectic basis} for this polarization is a basis $\beta$ of $L$ with respect to which the alternating form is given by the matrix
$$
J_E := \left( \begin{array}{cc}
        0 & E \\
       -E & 0
      \end{array} \right)
$$
with $E = \text{diag}(d_1, \dots, d_g)$.

Given a polarized abelian variety $(A=V/L , J_E)$ of dimension $g$ and bases $\alpha = \{v_1 , \ldots , v_g\}$ of $V$ and $\beta =\{ \lambda_1 , \ldots , \lambda_{2g}\}$ of $L$, the  \textit{period matrix} $\Pi =(\Pi_{j,i} )$ of $A$ with respect to these bases is a $g \times 2g$ complex matrix given by the coefficients of $\lambda_i$ expressed in terms of the $v_j$:
$$
\lambda_i = \sum_{j=1}^g \Pi_{j,i} \, v_j \ , \ 1 \leq i \leq 2g.
$$  

If the basis $\beta =\{ \lambda_1 , \ldots , \lambda_{2g}\}$ is chosen as symplectic with $E = \text{diag}(d_1, \dots, d_g)$, then $\alpha = \{v_1 = \frac{1}{d_1} \lambda_1 , \ldots , v_g = \frac{1}{d_g} \lambda_g\}$ is a basis for $V$, and  the period matrix for $A$ with respect to these bases has the form $\Pi = \left( E \ \ Z \right),$  with $Z$ in the Siegel space 
$$
\mathbb{H}_g = \{ Z \in M(g \times g, \mathbb{C}) : \mbox{}^tZ = Z , \Im Z > > 0 \};
$$ 
in this case $Z$ is called a \textit{Riemann matrix} for $A$.

Given two polarized abelian varieties $(A=V/L , J_E)$  and $(A'=V'/L' , J_{E'})$, of respective dimensions $g$ and $g'$, choose bases for $V$, $L$, $V'$ and $L'$, and denote the respective period matrices by $\Pi$ and $\Pi'$. To any homomorphism $f : A \to A'$ one can associate two matrices with respect to the corresponding bases: the analytic representation $\rho_a(f) : V \to V'$ of $f$: a $g' \times g$ complex matrix, and the rational representation $\rho_r(f) : L \to L'$ of $f$: a $2g' \times 2g$ integral matrix. 

The fundamental relation (the \textit{Hurwitz relation}) that connects them is given by
\begin{equation} \label{eq:Hurwitz}
	\rho_a(f) \ \Pi = \Pi' \, \rho_r(f).
\end{equation} 

If $f$ is biholomorphic, then $g'=g$, $\rho_a(f)$ is a nonsingular matrix, and $\rho_r(f)$ is a unimodular matrix. Conversely, if $C$ and $N$ are, respectively,   nonsingular and  unimodular $g \times g$ and $2g \times 2g$ matrices satisfying $C \ \Pi = \Pi' \ N$, then $C$ is the matrix of an invertible linear map $F : V \to V'$ that satisfies $F(L) = L'$ and covers an isomorphism $f : A \to A'$. More generally, an isogeny $f$ corresponds to a nonsingular $\rho_a(f)$; 
the order of the kernel of $f$ is called the \textit{degree of the isogeny}, and it equals  $|\det(\rho_r(f))|$.

Furthermore, if the bases are chosen so that the period matrices have the form  $\Pi = \left( E \ \ Z \right)$ and $\Pi' = \left( E \ \ Z' \right)$, then an isomorphism $f$ preserves the polarization if and only if $\rho_r(f)$ belongs to 
$$
\Sp^E(2g,\mathbb{Z}) =\{ N \in M(2g\times 2g, \mathbb{Z}): N^t \cdot J_E \cdot N =J_E\}
$$ 
with
$ 
J_E := \left( \begin{array}{cc}
	0 & E \\
	-E & 0
\end{array} \right)
$ as before, where $N^t$ denotes the transpose of $N$.

For any $f \in \End_{\QQ}(A)$, the subspace of $V$ and the sublattice of $L$ defining the abelian subvariety $A_f:=\Ima(f)$ will be denoted by $V_f$ and $L_f$ respectively.

As mentioned before, we need to induce the polarization of $A$ on the image of  the endomorphism $f$. Although this was also discussed in \cite[Prop. 3.9]{mascot}, they take a slightly different approach. Since we have explicitly constructed the lattice of the subvariety corresponding to the image of $f$ (see Remark \ref{R:basechica}), there is no need to find it inside the lattice of $A$, as done in \cite[Section 2]{mascot} (see their Remark 2.3).

For completeness and to fix notation, since it is the starting point of what we present in section \ref{S:matrizriemannchica}, we recall here briefly the method given in \cite{lroarch, lrrMathZ}  to find the induced polarization and a symplectic basis of a subvariety $A_f$ of a pav $A$: $\rho_r(f)$ is the rational representation of the endomorphism $f$ and $J_E$ the matrix of the polarization on $A$,  both with respect to a symplectic basis $\beta$ of $L$; $\gamma$ is a basis for $L_f$ given in the form of  a $2g \times 2h$ integral matrix $P_f$, whose columns are the coordinates  of the elements in $\gamma$ in the basis $\beta$; that is, $P_f$ defines the embedding  $A_f \hookrightarrow A$. Then the type of the induced polarization $D_f$ on $A_f$ is obtained by computing the elementary divisors of $P^t_f \cdot J_E \cdot P_f$.
Finally, a symplectic basis $\beta_f$ for $L_f$ expressed in terms of coordinates with respect to $\beta$ is obtained by applying the Frobenius algorithm (\cite[VI.3.  Lemma 1]{l}) to $\gamma$. So $\beta_f$ is captured as a $2g\times 2h$  matrix of coordinates with respect to the symplectic basis $\beta$ of $L$.

This method is implemented as an algorithm in \cite{rrgithub}; will refer to it as \textbf{Algorithm 2.1} in this work; it corresponds to the function \cite[InducedPolarization]{rrgithub} in our code.

\begin{rem}\label{R:basechica}

Note that the sublattice $L_f$  of $L$ corresponding to the subvariety $A_f$ is given by the pure lattice generated by  $\Ima(\rho_r(f))$; that is, $L_f=(\langle \rho_r(f)\rangle_{\ZZ} \otimes \mathbb{Q})\cap L,$ where $\langle \rho_r(f) \rangle_{\ZZ}$ denotes the lattice generated by the columns of $\rho_r(f)$. 

When $f$ comes from an idempotent in $\QQ[G]$, we can obtain $\rho_r(f)$ from the rational (symplectic) representation $\rho_r(G)$ of $G$. See \cite{brr} in case the action of $G$ on the Jacobian variety comes from the action of $G$ on a curve; the general result follows from this.
\end{rem}

\section{First method: Period matrix for a $G$-stable abelian subvariety of an abelian variety with the action of a group $G$}\label{S:achica}

Our next results deals with finding the period matrix for a subvariety $B$ of $A$ with $G$-action, given a symplectic representation of the $G$-action on $A$.

Let $G$ be a group and $(A=V/L,J_E)$ be a pav with $G$-action,  
where $J_E=\left(\begin{array}{c c} 0& E\\ -E& 0\\ \end{array}\right)$, $E=\textup{diag}(d_1,d_2,\dots, d_g)$. Assume the rational representation of $G$ is known: 
$$
\rho_r(G) \leq \Sp^E(2g \times 2g, \mathbb{Z}),
$$
where
$$
\Sp^E(2g \times 2g, \mathbb{Z}) =\{ N \in M(2g \times 2g, \mathbb{Z}) : N^t \cdot J_E \cdot N = J_E\}.
$$

Consider a subvariety $B  = V_B/L_B \subset A$   where $G$ acts (by restriction of the action on $A$). For instance, this is the case if $B = A_f =\Ima(f)$ where $f$ is any of the central idempotents in $\mathbb{Q}[G]$. 

Then a symplectic basis $\beta_B$ for $L_B$ can be computed as in Remark \ref{R:basechica}, and therefore the type $(n_1 , \ldots, n_h)$ of the induced polarization on $B$ is known. Our next result provides an algorithm to compute the symplectic representation of the action of $G$ on $B$ and a period matrix for $B$.

\begin{thm}\label{T:achica}
Let $A$ be a polarized abelian variety of dimension $g$  with $G$ action, and let $\beta$ be a symplectic basis for $A$ with respect to which the polarization on $A$ has matrix
$ 
J_E = \left( \begin{array}{cc}
	0 & E \\
	-E & 0
\end{array} \right)
$.

Assume the rational representation of $G$ in the basis $\beta$ is given by $\rho_r : G \to \Sp^E(2g \times 2g, \mathbb{Z})$.

Let $B$ be a subvariety of $A$ of dimension $h$ to which the action of $G$ on $A$ restricts, and let $i_B : B \to A$ denote the natural inclusion. Denote by $\beta_B$ a symplectic basis  for $B$ for which the induced polarization on $B$ has the form $ 
J_D := \left( \begin{array}{cc}
	0 & D \\
	-D & 0
\end{array} \right)
$, with $D = \textup{diag}(n_1, n_2,\dots, n_h)$. 

Then the rational representation of $G$ in the basis $\beta_B$ is given by $\rho_{r,B} : G \to \Sp^D(2h \times 2h, \mathbb{Z})$, where, for every $g \in G$, $\rho_{r,B}(g)$ is the unique matrix in $\Sp^D(2h \times 2h, \mathbb{Z})$ satisfying
\begin{equation} \label{eq:chica}
\rho_r(g) \cdot \rho_r(i_B) = \rho_r(i_B) \cdot \rho_{r,B}(g) , 
\end{equation}
where $\rho_r(i_B)\in M(2g\times 2h,\mathbb{Z})$ is the rational representation of $i_B$ with respect to the bases $\beta_B$ and $\beta$. 

Furthermore, the period matrix for $B=V_B/L_B$ with respect to the bases $\beta_B = \{u_1 , \ldots , u_{2h}\}$ for $L_B$ and  $\alpha_B = \left\{ \frac{1}{n_1} u_1, \ldots , \frac{1}{n_h} u_h \right\}$ for $V_B$ is of the form
$$ 
\Pi_B =(D \, Z_B) ,
$$ 
where $Z_B \in \mathbb{H}_h$ satisfies  
\begin{equation} \label{eq:zchica}
Z_B \, \gamma \, D^{-1} \, Z_B +D \, \alpha \, D^{-1} Z_B  -Z_B \, \delta -D\, \mu  = 0
\end{equation}
for each $g \in G$, where $ \rho_{r,B}(g) = \left( \begin{array}{cc}
	\alpha & \mu \\\gamma & \delta
\end{array}
\right)$, with $\alpha$, $\mu$, $\gamma$ and $\delta$  integral $h \times h$ matrices.  

Also, the analytic representation of the action of $g$ in $G$ restricted to $B$ is given by
$$
\rho_{a,B}(g) = (D\, \alpha +Z_B \, \gamma)\, D^{-1}\, .
$$
\end{thm}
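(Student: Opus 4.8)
The plan is to prove the three assertions in turn, treating the rational representation $\rho_{r,B}$ first, since the formula for $\rho_{a,B}(g)$ and the equation for $Z_B$ will then fall out of the Hurwitz relation \eqref{eq:Hurwitz} applied to the restricted action. First I would establish \eqref{eq:chica} together with the uniqueness claim. Because $B$ is $G$-stable, each $g\in G$ restricts to an endomorphism $g|_B$ of $B$, and the inclusion is equivariant: $g\circ i_B = i_B\circ(g|_B)$ as maps $B\to A$. Applying the rational representation, which is multiplicative with respect to composition, gives $\rho_r(g)\,\rho_r(i_B)=\rho_r(i_B)\,\rho_r(g|_B)$, so that $\rho_{r,B}(g):=\rho_r(g|_B)$ satisfies \eqref{eq:chica}. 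Uniqueness follows because $i_B$ is an embedding, so $\rho_r(i_B)\in M(2g\times 2h,\ZZ)$ has full column rank $2h$ and is left-cancellable over $\QQ$. To see that $\rho_{r,B}(g)\in\Sp^D(2h\times 2h,\ZZ)$, I would use that the induced polarization is the pullback of $J_E$ along the embedding, i.e. $\rho_r(i_B)^t\,J_E\,\rho_r(i_B)=J_D$ once $\beta_B$ is symplectic (this is exactly the induced-polarization construction recalled in Section~\ref{S:matricesperiodos}). Substituting $\rho_r(i_B)\,\rho_{r,B}(g)=\rho_r(g)\,\rho_r(i_B)$ and $\rho_r(g)^tJ_E\rho_r(g)=J_E$ into $\rho_{r,B}(g)^t\bigl(\rho_r(i_B)^tJ_E\rho_r(i_B)\bigr)\rho_{r,B}(g)$ returns $J_D$, as required.

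For the remaining two assertions I would apply the Hurwitz relation \eqref{eq:Hurwitz} to the endomorphism $g|_B:B\to B$, with respect to the bases $\alpha_B$ and $\beta_B$, obtaining $\rho_{a,B}(g)\,\Pi_B=\Pi_B\,\rho_{r,B}(g)$. Writing $\Pi_B=(D\ \, Z_B)$ and $\rho_{r,B}(g)=\left(\begin{smallmatrix}\alpha&\mu\\\gamma&\delta\end{smallmatrix}\right)$, and expanding both sides in their two $h\times h$ column blocks, yields
\begin{align*}
\rho_{a,B}(g)\,D &= D\alpha+Z_B\gamma,\\
\rho_{a,B}(g)\,Z_B &= D\mu+Z_B\delta.
\end{align*}
Solving the first block for $\rho_{a,B}(g)$ (using that $D$ is invertible) gives the stated formula $\rho_{a,B}(g)=(D\alpha+Z_B\gamma)D^{-1}$. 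Substituting this into the second block and rearranging produces precisely the quadratic matrix equation \eqref{eq:zchica}. The conditions defining $Z_B\in\HH_h$ are not extra facts to verify here: they hold because $(B,J_D)$ is a pav and $\alpha_B,\beta_B$ were chosen as in Section~\ref{S:matricesperiodos} so that $\Pi_B$ already has the normalized form $(D\ \, Z_B)$ with $Z_B$ a Riemann matrix.

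The genuinely delicate step is the first one: verifying that the equivariance relation \eqref{eq:chica} pins down a matrix that is simultaneously integral and symplectic for the induced form $J_D$; everything afterward is a mechanical block computation from \eqref{eq:Hurwitz}. I would therefore concentrate on checking that $\rho_r(i_B)$ is the correct embedding matrix (its columns being the coordinates of $\beta_B$ in $\beta$), that it has full column rank, and that the Frobenius-normalized basis $\beta_B$ really does turn the pulled-back form into $J_D$, since it is exactly this identity that makes the conjugation argument closing $\rho_{r,B}(g)$ inside $\Sp^D(2h\times 2h,\ZZ)$ valid.
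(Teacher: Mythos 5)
Your proof is correct and follows essentially the same route as the paper's: equivariance of the inclusion $g\circ i_B = i_B\circ g_B$ gives \eqref{eq:chica}, conjugating the identity $\rho_r(i_B)^t\,J_E\,\rho_r(i_B)=J_D$ by $\rho_{r,B}(g)$ shows the symplectic property, and the block expansion of the Hurwitz relation yields both the formula for $\rho_{a,B}(g)$ and equation \eqref{eq:zchica}. You additionally justify the uniqueness claim via the full column rank of $\rho_r(i_B)$, a point the paper's proof leaves implicit.
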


\begin{proof}
	Writing $B =V_B/L_B$, we observe that since the action of $G$ on $A$ restricts to $B$, $\rho_r(g)(L_B)= L_B$ and $\rho_a(g)(V_B) = V_B$ for each $g \in G$. Therefore, denoting by $g_B$ the automorphism of $B$ obtained by restricting $g$ to $B$, we clearly have
	$$
	g \circ i_B = i_B \circ g_B .
	$$ 
	
	Now \eqref{eq:chica} is the matrix translation of this last equality, where $ \rho_{r,B}(g)$ is the $2h \times 2h$ rational representation of $g_B$ with respect to the basis $\beta_B$. To verify that $\rho_{r,B}(g) \in \Sp^D(2h \times 2h, \mathbb{Z})$, observe that
	$$
	\rho_r(i_B)^t \cdot J_E \cdot \rho_r(i_B) = J_D \, ,
	$$
	and hence
	\begin{align*}
		\rho_{r,B}(g)^t \cdot J_D \cdot \rho_{r,B}(g) &= \rho_{r,B}(g)^t \cdot \left( \rho_r(i_B)^t \cdot J_E \cdot \rho_r(i_B) \right) \cdot  \rho_{r,B}(g) \\
		& = \rho_r(i_B)^t \cdot \rho_r(g)^t \cdot J_E \cdot \rho_r(g) \cdot \rho_r(i_B)\\
		& = J_D \, .
	\end{align*}
	
	It is clear that the period matrix for $B$ with respect to the bases   $\beta_B = \{u_1 , \ldots , u_{2h}\}$ for $L_B$ and  $\alpha_B =  \left\{\frac{1}{n_1} u_1, \ldots , \frac{1}{n_h} u_h \right\}$ for $V_B$ is of the form
	$$ 
	\Pi_B =(D \, Z_B) ,
	$$ 
	where $Z_B \in \mathbb{H}_h$. Since for each $g \in G$ its restriction $g_B$ to $B$ is an automorphism of the polarized abelian variety $(B,J_D)$, the period matrix $\Pi_B$ satisfies 
	\begin{equation} \label{eq:chica2}
		\rho_{a,B}(g) \, \Pi_B = \Pi_B \, \rho_{r,B}(g). 
	\end{equation}

	Writing  $ \rho_{r,B}(g) = \left( \begin{array}{cc}
		\alpha & \mu \\\gamma & \delta
	\end{array}
	\right)$, with $\alpha$, $\mu$, $\gamma$ and $\delta$ integral  $h \times h$ matrices, and comparing both sides of \eqref{eq:chica2}, we see that
	$$
	\rho_{a,B}(g) = (D\, \alpha +Z_B \, \gamma)\, D^{-1}.
	$$
	and that \eqref{eq:zchica} holds.
\end{proof}

Clearly, Theorem \ref{T:achica} leads to an algorithm, whose code can be found at \cite[ActionGSubvariety.mgm]{rrgithub}. It  includes first to restrict the action from the ambient pav $A$ to a $G$-invariant subvariety $B$, with the function ActionGSubvariety, and then to use this restricted representation of $G$ to find the fixed Riemann matrices by this action. This last part is an upgrade of our algorithm in [3], where we found the set of Riemann matrices of ppav of dimension $g$ fixed by the action of $G$ represented in $\Sp(2g, \mathbb{Z})$, now with the function MoebiusInvariantDZ in \cite[polyDZ.m]{rrgithub}.

We use this result to find the period matrices of the isotypical factors in Sections \ref{S:g101} and \ref{S:g11}.

\begin{rem}\label{R:Schottky}
	In some cases, a family $\mathcal{Z}_{\lambda}$ of fixed matrices under the action of a given group will be found, with $\lambda$ in a set of complex parameters. Each element of $\mathcal{Z}_{\lambda}$ corresponds to a pav that shares the same action with the one we start with; that is, admitting the same $\rho_r(G)$ action. To determine explicitly the parameters $\lambda$ corresponding to the precise Jacobian or pav or family under study is in general difficult, as this is closely related to the Schottky problem; sometimes this can be achieved by using some extra known geometrical properties of the given variety. This is certainly a complication  that cannot be avoided, but, as a compensation, our methods can  produce numerical approximations as well as algebraic numbers, depending on the geometry of the variety and the action. They are effective methods to find period matrices that work in many cases in the context we are interested in: completely decomposable Jacobian varieties, CM-varieties, and others.
\end{rem}

Of course the result on Theorem \ref{T:achica} applies to the computation of a period matrix for the ambient abelian variety $A$ itself, but in practice the algorithm may fail computationally for $A$ if the dimension of $A$ is large, and still work for a $G$-invariant subvariety of lower dimension; as is the case in our examples, in Sections \ref{S:g101} and \ref{S:g11}.

\medskip

\section{Second method: find a period matrix for the image of $f\in \End_{\QQ}(A)$, given $\Pi_A=(E\, Z)$}\label{S:matrizriemannchica}

In order to obtain the period matrix of the subvariety $A_f=\textup{Im}(f)$, in this section we extend the method in \cite{lroarch, lrrMathZ}, whose outputs are: 

- a symplectic basis  $\beta_f$ for the lattice $L_f$ of $A_f$, 

- the rational representation $P_f$ of the inclusion $i_f:A_f\to A$, and 

- the induced polarization $D_f$ in $A_f$, 

Recall that its input is a period matrix $\Pi_A=(E\, Z)$ for the ambient  polarized abelian variety $A$. See Remark \ref{R:basechica} and what we call Algorithm 2.1.

\begin{thm}\label{T:matrizriemannchica}
Let $(A=V/L,J_E)$ be a pav with period matrix $\Pi_A=(E\; Z)\in M(g\times 2g,\CC)$ in suitable bases $\alpha$ for $V$ and $\beta$ for $L$, where $J_E=\left(\begin{array}{c c} 0& E\\ -E& 0\\ \end{array}\right)$, $E=\textup{diag}(d_1,d_2,\dots, d_g)$, and $Z\in \mathbb{H}_g$ is the Riemann matrix of $A$. 

For $f\in \End_{\QQ}(A)$, consider the subvariety $A_f :=\Ima(f) =V_f/L_f$ of dimension $h$ and a symplectic basis $\beta_f$ for its lattice $L_f$. 

Denote by  $i_f : A_f \to A$ the natural inclusion, by $P_f:=\rho_r(i_f) \in M(2g\times 2h,\ZZ)$ the matrix  of the rational representation of  $i_f $ with respect to the symplectic bases $\beta_f$ and $\beta$, and the induced polarization on $A_f$ by $D = \textup{diag}(n_1, n_2,\dots, n_h)$, with $(n_1 , \ldots, n_h)$ its type. Then

\begin{enumerate}
\item  If the symplectic basis for $L_f$ is $\beta_f = \{u_1 , \ldots , u_{2h}\}$, then $\alpha_f = \left\{\frac{1}{n_1} u_1, \ldots , \frac{1}{n_h} u_h \right\}$ is a basis for the complex vector space $V_f$, and the matriz $\rho_a(i_f) \in M(g\times h,\mathbb{C})$ for the analytic representation of $i_f$ with respect to the bases $\alpha_f$ and $\alpha$ is given by
$$
\rho_a(i_f) = (E \ Z) \, \beta_{f_1} \, D^{-1} ,
$$   
where $\beta_{f_1}, \beta_{f_2} \in M(2g\times h,\ZZ)$  are the two matrices such that $\rho_r(i_f) = \left( \beta_{f_1} \ \beta_{f_2}\right)$.

\item The period matrix of $A_f$ with respect to the bases $\alpha_f$ and $\beta_f$ is given by $$
\Pi_{A_f}=(D\; W) ,
$$
where $W \in \mathbb{H}_h$ is the unique solution to 
$$
 (E \ Z) \, \beta_{f_1} \, D^{-1} \, W =  (E \ Z) \, \beta_{f_2} .
$$
\end{enumerate}

\end{thm}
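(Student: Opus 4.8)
The plan is to read off both statements from the Hurwitz relation \eqref{eq:Hurwitz} applied to the inclusion $i_f : A_f \to A$, after first fixing the normal form of the period matrix of $A_f$. To this end I would invoke the general structure theory recalled for $A$ in Section \ref{S:matricesperiodos}, now applied to the polarized subvariety $(A_f, J_D)$: since $\beta_f = \{u_1, \ldots, u_{2h}\}$ is a symplectic basis of $L_f$ adapted to the induced polarization of type $(n_1, \ldots, n_h)$, the scaled collection $\alpha_f = \{\frac{1}{n_1}u_1, \ldots, \frac{1}{n_h}u_h\}$ is automatically a $\CC$-basis of $V_f$, and in the bases $\alpha_f, \beta_f$ the period matrix of $A_f$ has the normal form $\Pi_{A_f} = (D \; W)$ with $W \in \mathbb{H}_h$. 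This simultaneously gives the basis claim in part (1) and the existence of $W$ in part (2), leaving only the explicit identification of $\rho_a(i_f)$ and $W$.

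Next I would write out the Hurwitz relation for $i_f$. Using $\rho_r(i_f) = P_f = (\beta_{f_1} \; \beta_{f_2})$ together with the two period matrices, \eqref{eq:Hurwitz} becomes
$$\rho_a(i_f)\,(D \; W) = (E \; Z)\,(\beta_{f_1} \; \beta_{f_2}).$$
Splitting this $g \times 2h$ identity into its left and right blocks of $h$ columns yields $\rho_a(i_f)\,D = (E \; Z)\,\beta_{f_1}$ and $\rho_a(i_f)\,W = (E \; Z)\,\beta_{f_2}$. As $D = \textup{diag}(n_1, \ldots, n_h)$ is invertible, the first identity gives $\rho_a(i_f) = (E \; Z)\,\beta_{f_1}\,D^{-1}$, which is the formula of part (1); substituting it into the second identity produces $(E \; Z)\,\beta_{f_1}\,D^{-1}\,W = (E \; Z)\,\beta_{f_2}$, the defining equation of part (2). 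The same two block equations can be obtained by a direct coordinate chase instead of quoting \eqref{eq:Hurwitz}: the columns of $\Pi_A P_f = (E \; Z) P_f$ are exactly the coordinates of the $u_i$ in the basis $\alpha$, and rewriting these in $\alpha_f$ reproduces both identities.

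For the uniqueness asserted in part (2) I would argue that $i_f$ is an embedding, so $\rho_a(i_f) = (E \; Z)\,\beta_{f_1}\,D^{-1}$ is an injective $\CC$-linear map $V_f \to V$ and hence has full column rank $h$; therefore $W \mapsto \rho_a(i_f)\,W$ is injective on $h \times h$ matrices and the equation of part (2) admits at most one solution, while existence and the membership $W \in \mathbb{H}_h$ are guaranteed by the structure-theory step. The part I expect to require the most care is not the linear algebra but the justification that the symplectic basis $\beta_f$ returned by the Frobenius procedure (Algorithm 2.1) is genuinely adapted to the induced polarization, so that the normal form $(D \; W)$ with $W$ symmetric and $\Im W$ positive definite is actually available; granting that, the remainder is a routine block comparison inside the Hurwitz relation.
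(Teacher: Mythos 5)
Your proposal is correct and follows essentially the same route as the paper's own proof: fix the normal form $\Pi_{A_f}=(D\;W)$ with $W\in\mathbb{H}_h$ coming from the symplectic basis $\beta_f$ and the scaled basis $\alpha_f$, write the Hurwitz relation $\rho_a(i_f)\,(D\;W)=(E\;Z)\,\rho_r(i_f)$, split it into the two column blocks, invert $D$ to get part (1), and use the maximal rank $h$ of $\rho_a(i_f)$ (injectivity of the inclusion) for the uniqueness of $W$ in part (2). The point you flag as delicate — that the Frobenius algorithm produces a basis genuinely adapted to the induced polarization, encoded in the paper by the identity $\rho_r(i_f)^t\,J_E\,\rho_r(i_f)=J_D$ — is exactly the step the paper handles via Remark \ref{R:basechica} and Algorithm 2.1, so no gap remains.
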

\begin{proof}
	
	According to Remark \ref{R:basechica}, we can find a basis $\gamma$ for $L_f$ in terms of $\beta$, use it to determine the type of the polarization $D$ of $A_f$ obtained by restriction of the polarization $E$ of $A$ to $A_f$, and then apply  the Frobenius algorithm to $\gamma$ to obtain a symplectic basis $\beta_f =\{u_1 , \ldots , u_{2h}\}$ for $L_f$.
	
	It follows that the rational representation $\rho_r(i_f)$ of the inclusion map $i_f : A_f \to A$ with respect to the bases $\beta_f$ and $\beta$ is the matrix in $M(2g \times 2h,\mathbb{Z})$ whose $j$-th column is given by the coordinates of $u_j$ with respect to  $\beta$, for $1 \leq j \leq 2h$, and the equality
	\begin{equation} \label{eq:pol}
		\mbox{}^t\rho_r(i_f) \, J_E \, \rho_r(i_f)  =\left(\begin{array}{c c} 0& D\\ -D& 0\\ \end{array}\right)
	\end{equation}
	is the matrix translation of the equality $i_f^*(E)= \widehat{i_f} \circ \lambda_E \circ i_f$, where $\lambda_E : A \to \widehat{A}$ is the isogeny associated to $E$.
	
	Since $D = \textup{diag}(n_1, n_2,\dots, n_h)$, with $(n_1 , \ldots, n_h)$ the type of $A_f$, it follows that taking $\alpha_f = \left\{ \frac{1}{n_1} u_1, \ldots , \frac{1}{n_h} u_h \right\}$ we obtain a basis for the complex vector space $V_f$, and that the period matrix for $A_f$ with respect to the bases $\alpha_f$ and $\beta_f$ has the form $\Pi_{A_f} = (D\; W),$ where $W \in \mathbb{H}_h$.
	
	The Hurwitz relation \eqref{eq:Hurwitz} then implies that
	\begin{equation} \label{eq:Hurwiz1}
		\rho_a(i_f) \ (D \ W) = (E \ Z) \, \rho_r(i_f),
	\end{equation}
	where $\rho_a(i_f)$ is the $g \times h$ matrix of the analytic representation of $i_f$ with respect to the bases $\alpha_f$ and $\alpha$ for $V_f$ and $V$ respectively. 
	
	Writing  the matrix $\rho_r(i_f) = \left( \beta_{f_1} \ \beta_{f_2}\right)$, with   $\beta_{f_1}, \beta_{f_2} \in M(2g\times h,\ZZ)$, we see that \eqref{eq:Hurwiz1} is equivalent to
	$$
	\rho_a(i_f) \, D = (E \ Z) \, \beta_{f_1}  \ \ \textup{and} \ \ \rho_a(i_f) \, W = (E \ Z) \, \beta_{f_2},
	$$
	from where it follows  that $\rho_a(i_f) = (E \ Z) \, \beta_{f_1} \, D^{-1},$ and that $W$ may be found from 
	$$
	(E \ Z) \, \beta_{f_1} \, D^{-1} \, W =  (E \ Z) \, \beta_{f_2} 
	$$
	follows, since $\rho_a(i_f) = (E \ Z) \, \beta_{f_1} \, D^{-1}$ has maximal rank $h$.
\end{proof}

\begin{rem}\label{R:uso-tipico}

Theorem \ref{T:matrizriemannchica} leads to Algorithm \ref{T:matrizriemannchica}, which can be found with code at \cite[ActionGSubvariety.mgm]{rrgithub} (functions IsotypicalFactorsAll and Subvariety). We use it combining resources from Magma \cite{magma} and  Sagemath \cite{sage} in Section \ref{S:g11} to find the period matrices of the primitive factors.

\end{rem}

\medskip

We point out that, once the period matrices for a set of subvarieties (fully) decomposing $A$ and the rational representation of the decomposing isogeny are known, it is possible to recover the period matrix for $A$ from these data. This is the case for the isotypical or GAD decompositions of $A$, for instance. Nevertheless, it is a technical result that is not actually needed for the purposes of this work, which is to decompose $A$ into simple factors. So for the sake of the length of this article, we decided not to include it here. It will be reported in a forthcoming work.

%\end{rem}

\section{Beyond the group algebra decomposition}\label{S:Beyond}

Let $(A=V/L, \cL_0)$ be a polarized abelian variety with the action of a (finite) group $G$.  From the results in Sections \ref{S:matrizriemannchica} and \ref{S:achica}, we can compute the period matrices for the isotypical factors $A_j$ and the primitive factors $B_j$ decomposing $A$ as follows: 

To obtain the period matrices for the $A_j$: if the rational representation for the action of $G$ on $L$ is given, apply Theorem \ref{T:achica}; if the Riemann matrix for $A$ is known, apply Theorem \ref{T:matrizriemannchica}. Recall from Section \ref{S:matricesperiodos} that each $A_j$ is the image of an explicit central idempotent $e_j \in \mathbb{Q}[G] \subset \End_{\mathbb{Q}}(A)$.  

Once the period matrix for $A_j$ has been found, since for each  $B_j$ one can find $f_j \in \mathbb{Q}[G] \subset \End_{\mathbb{Q}}(A)$ whose image is $B_j$ (see \cite{cr}), the period matrix for $B_j$ may be found by applying Theorem  \ref{T:matrizriemannchica} to $B_j \subset A_j$.

As we mentioned in the Introduction, a natural question is the comparison between the Group Algebra decomposition \eqref{e:gad} of $A$ 
$$
A \sim B_1^{n_1} \times \ldots \times B_r^{n_r} 
$$
and its Poincar\'e decomposition in terms of simple factors
$$
A \sim C_1^{k_1} \times \ldots \times C_s^{m_s}.
$$

The factors in the first one satisfy $\Hom_G (B_i^{n_i}, B_j^{n_j}) =0$, whereas $\Hom(C_i,C_j)=0$ for $i\neq j$. It may well happen that two different $B_j$ are isogenous, or that a $B_j$ is non-simple, as we will see in the examples.

In the next subsection \ref{SS:conocido} we recall some known results about the relation between subvarieties, idempotents, and the Neron-Severi group of a polarized abelian variety $A$, including a criterion to decide whether $A$ is simple in terms of its period matrix from \cite{alrJPAA}. 
We omit details and proofs, and refer to \cite{alrJPAA} and \cite{bl} for details.

Then, in subsection \ref{SS:nosotras}, we present a new technique  to actually decompose a polarized abelian variety into its simple factors.  

In particular, this technique applies to the primitive factors $B_j$ in the GAD decomposition of a polarized abelian variety with the action of a (finite) group $G$, since we can compute their period matrices as described above, and then apply the method to effectively decompose $B_j$ if it is non simple.  In this way we effectively go beyond the information that the group action gives.

We point out that the results in \ref{SS:conocido} allow us to determine whether a pav is simple or not, by a necessary and sufficient criterion. We worked out throughout the details and developed the effective method to decompose we present here. It corresponds to actually computing the period matrix of the subvariety if the criterion says it exists.

\subsection{Known results about subvarieties}\label{SS:conocido}

Let $(A=V/L, \cL_0)$ be a polarized abelian variety of type $(d_1,\dots, d_g)$, and consider  a symplectic basis $\{\lambda_1 , \ldots, \lambda_{2g}\}$ for $L$ and a basis for $V$ such that with respect to these bases the period matrix of $A$ is $(E \; Z )$, where $E=\text{diag}(d_1,\dots,d_g)$.  

If $x_1,\ldots,x_{2g}$ are the real 
coordinate functions of $L \otimes \RR$ associated to the given basis of $L$ and $z_1, \ldots , z_g$ are the 
complex  coordinate functions with respect to the given basis of $V$, these functions are related by the equation
\begin{equation} \label{eq3.1}
	\begin{pmatrix}z_1\\\vdots\\ z_g\end{pmatrix}=(E\;Z)\begin{pmatrix}x_1\\\vdots\\ 
		x_{2g}\end{pmatrix} \, .
\end{equation}

Considering $\{ dx_i \wedge dx_j \;|\; 1 \leq i < j \leq 2g \}$ as the canonical basis of $H^2(A,\QQ) = \wedge^2 \QQ^{2g}$,
$NS_{\QQ}(A)$ can be identified with 
\begin{equation} \label{eq3.2}
	NS_{\QQ}(A) = \{\omega\in\wedge^2\QQ^{2g}:\omega\wedge dz_1\wedge\cdots\wedge dz_g=0\},
\end{equation}
given by the image of the map

\begin{equation} \label{gamma}
	\begin{array}{c c c c}\gamma: & NS_{\QQ}(A) & \to &  H^2(A,\QQ) \\ & \mu & \mapsto & -\sum_{i < j}\mu(\lambda_i,\lambda_j) dx_i \wedge dx_j\\ \end{array}
\end{equation}

We also recall from \cite[Proposition 5.2.1]{bl} the  following  isomorphism  of $\QQ$-vector spaces  

\begin{equation}\label{varphi}
			\varphi : NS_{\QQ}(A)\to \End_{\QQ}(A)^s 
		\end{equation}
		defined by $\varphi(\cL)=\phi_{\cL_0}^{-1}\phi_{\cL}$ for $\cL \in NS_{\QQ}(A) $, where $\phi_{\cL} :A\to \widehat{A}$ is the isogeny induced by $\cL$.

\medskip

In \cite[Theorem 4.1]{alrJPAA} a necessary and sufficient criterion for the simplicity of $A$ in terms of its period matrix is given.  Using the above identifications, the criterion is translated into the existence of a tuple of rationals satisfying some nonlinear equations. In \cite{alrJPAA}, the corresponding equations for dimensions two and three are derived. Just for the sake of completeness we include here the equations for $A$ of dimension two. A similar system of non-linear equations arises in the higher dimensional situation.

\begin{cor}\label{p3.3}\cite[Prop. 4.4]{alrJPAA}
Let $(A,\mathcal{L})$ be a polarized abelian surface with period matrix 
$Z=\left(\begin{array}{cccc} 1& 0& z_{11} & z_{12} \\
	                     0& d&  z_{12} & z_{22}
                       \end{array}\right)$. Then $A$ admits a 
sub-elliptic curve if and only if there exists a vector 
$(a_{12},a_{13},a_{14},a_{23},a_{24},a_{34}) \in \QQ^6$ satisfying
\begin{eqnarray*}
-d&=&da_{13}+a_{24},\\
 0&=&(z_{11}z_{22}-z_{12}^2)a_{12} - da_{14}z_{11} + da_{13}z_{12} - a_{24}z_{12} + a_{23}z_{22} + da_{34} \; and \\
0&=&a_{14}a_{23}-a_{13}a_{24}+a_{12}a_{34}.\\
\end{eqnarray*}
\end{cor}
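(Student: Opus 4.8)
The plan is to specialize the general period-matrix criterion \cite[Theorem 4.1]{alrJPAA} to the case $g=2$ and to read the three equations off by an explicit computation in the exterior algebra $\wedge^2\QQ^4$, together with the dictionary between $NS_\QQ(A)$ and symmetric endomorphisms recalled in \eqref{varphi}. I would write a general rational $2$-form as $\omega = \sum_{i<j} a_{ij}\, dx_i \wedge dx_j \in \wedge^2\QQ^4$, so that the six coefficients $(a_{12},a_{13},a_{14},a_{23},a_{24},a_{34})$ are exactly the unknowns of the statement, and let $M=(a_{ij})$ denote the associated alternating $4\times 4$ matrix. Since $\dim A=2$, a proper nonzero abelian subvariety is necessarily an elliptic curve, so the criterion becomes: $A$ admits a sub-elliptic curve if and only if there is such an $\omega$ that is a N\'eron--Severi class and whose attached symmetric endomorphism $\varphi(\omega)$ is a projection onto a one-dimensional factor. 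I would then translate each of these three requirements---being of type $(1,1)$, being degenerate, and being normalized to a genuine projection---into one of the displayed equations.

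For the $(1,1)$ condition I would use the identification \eqref{eq3.2}: $\omega\in NS_\QQ(A)$ if and only if $\omega\wedge dz_1\wedge dz_2=0$. From \eqref{eq3.1} with $E=\operatorname{diag}(1,d)$ one has $dz_1 = dx_1 + z_{11}\,dx_3 + z_{12}\,dx_4$ and $dz_2 = d\,dx_2 + z_{12}\,dx_3 + z_{22}\,dx_4$; expanding $dz_1\wedge dz_2$ and pairing with $\omega$, the coefficient of the volume form $dx_1\wedge dx_2\wedge dx_3\wedge dx_4$ is
\[
(z_{11}z_{22}-z_{12}^2)\,a_{12} + d\,z_{12}\,a_{13} - d\,z_{11}\,a_{14} + z_{22}\,a_{23} - z_{12}\,a_{24} + d\,a_{34},
\]
and setting it to zero is precisely the second equation. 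This is the one genuinely computational step, and it is routine.

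The remaining two equations encode that $\omega$ cuts out a one-dimensional subvariety. For the radical of $\omega$ to be nontrivial the matrix $M$ must be singular, i.e. $\det M=\operatorname{Pf}(M)^2=0$; since $\operatorname{Pf}(M)=a_{12}a_{34}-a_{13}a_{24}+a_{14}a_{23}$, this is the third equation. Because $\omega$ is a N\'eron--Severi class, its radical is invariant under the complex structure, hence is the tangent space of an elliptic subvariety $A_\omega$, and conversely any sub-elliptic curve yields via \eqref{varphi} a degenerate symmetric class; this is where \cite[Theorem 4.1]{alrJPAA} and the idempotent--subvariety correspondence of \cite{bl} must be invoked rather than computed. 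Finally, the first equation $d\,a_{13}+a_{24}=-d$ is the normalization fixing the scale of $\omega$ (and excluding $\omega=0$): computing the rational trace of the endomorphism attached to $\omega$, whose rational representation is $J_E^{-1}M$ up to the sign in \eqref{gamma}, gives $\tr\bigl(J_E^{-1}M\bigr)=2a_{13}+\tfrac{2}{d}a_{24}$, and requiring this to equal $-2$---the trace of a projection onto a one-dimensional factor, the sign being dictated by the conventions in \eqref{gamma} and \eqref{varphi}---yields exactly that relation.

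The main obstacle is not the exterior-algebra bookkeeping but justifying that a degenerate nonzero N\'eron--Severi class really does produce a sub-elliptic curve and, conversely, that every sub-elliptic curve is detected by some $\omega$ satisfying all three equations; in particular one must check that the quantity $d\,a_{13}+a_{24}$ attached to a genuine projection is nonzero, so that the normalization of the first equation can always be achieved. Granting the correspondence between N\'eron--Severi classes and symmetric endomorphisms, the rest reduces to linear algebra over $\QQ$, and the equivalence of the three equations with the existence of a sub-elliptic curve follows.
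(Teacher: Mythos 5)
The paper itself gives no proof of Corollary \ref{p3.3}: it is quoted verbatim from \cite[Prop.~4.4]{alrJPAA}, so there is no internal argument to compare against. Your reconstruction is correct, and it is essentially the derivation that the paper outsources to \cite{alrJPAA}: specialize their Theorem 4.1 to $g=2$ via the identifications \eqref{gamma} and \eqref{varphi}. Your three translations are the right ones, and the computations check out: the wedge expansion of $\omega\wedge dz_1\wedge dz_2$ using \eqref{eq3.1} with $E=\mathrm{diag}(1,d)$ reproduces the second equation exactly; the third equation is the Pfaffian of the alternating matrix $M=(a_{ij})$, i.e.\ the degeneracy condition $\omega\wedge\omega=0$; and $\tr(J_E^{-1}M)=2a_{13}+\tfrac{2}{d}a_{24}$ is correct, so the first equation is the trace normalization $\tr(J_E^{-1}M)=-2$, with the minus sign coming from the sign in \eqref{gamma}.

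Two remarks closing the ``main obstacle'' you flag, which is in fact not an obstacle. Forward direction: if $B\subset A$ is an elliptic curve and $\varepsilon_B$ its associated symmetric idempotent, then $\tr\rho_r(\varepsilon_B)$ equals the rank of $\rho_r(\varepsilon_B)$, which is $2\dim B=2$; so the quantity $da_{13}+a_{24}$ attached to a genuine projection is not merely nonzero, it equals $-d$ on the nose, and no rescaling or nonvanishing check is needed. Backward direction: your radical argument already suffices. A solution has $\omega\neq 0$ (forced by the first equation, since $d\neq0$), rank two (third equation), and lies in $NS_\QQ(A)$ (second equation); its radical is then a $\QQ$-rational, complex-structure-invariant real plane, so $(\mathrm{rad}\,\omega\otimes\RR)/(L\cap\mathrm{rad}\,\omega)$ is a one-dimensional complex subtorus of $A$, i.e.\ a sub-elliptic curve. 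Alternatively, in the idempotent language used in Theorem \ref{beyond}: Rosati-symmetric endomorphisms have analytic representation self-adjoint with respect to the positive definite Hermitian form of the polarization, hence diagonalizable with real eigenvalues; singularity plus trace $2$ of the rational representation then forces the analytic eigenvalues to be $1,0$, so $f_\omega^2=f_\omega$ and $\Ima(f_\omega)$ is an elliptic curve. One small mislabel in your write-up: the radical of $\omega$ is the tangent space of $\ker f_\omega=\Ima(1-f_\omega)=A_\omega^c$, not of $A_\omega=\Ima(f_\omega)$; since both are elliptic curves here, this affects nothing.
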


\subsection{Decomposing $A$ given its period matrix $\Pi = (E\, Z)$}\label{SS:nosotras}

As announced in the introduction (Section \ref{S:Intro}), in this work we push forward this result and, by pursuing the identifications in the previous subsection, we actually find the subvariety corresponding to the tuple solving the equations, so that we can explicitly decompose the variety $A$ in this way. In fact, we find the following procedure to decompose a pav $A$ given its period matrix; it actually describes simple subvarieties decomposing it and the corresponding isogeny, without computing the full endomorphism algebra.

\begin{thm}\label{beyond}
Let $A$ be a polarized abelian variety of dimension $g$ with period matrix $\Pi = (E\, Z)$. The following procedure yields a decomposition of $A$ into simple factors.

\begin{enumerate}
\item  For a given $n \in \{1 , \ldots , [\frac{g+1}{2}]\}$, look for $\omega=\sum_{i < j}a_{ij}dx_i \wedge dx_j$, with $a_{ij}\in \QQ$ satisfying all the equations in Theorem 4.1 in \cite{alrJPAA} (such as those in Corollary \ref{p3.3} for $g=2$). 
\item If such an $\omega$ exists, find $E_{\omega}=\gamma^{-1}(\omega)\in NS_{\QQ}(A)$ from  \eqref{gamma} and continue with (3) below. Otherwise, try with a different $n$. If there is no such $w$ for all $1\leq n \leq  [\frac{g+1}{2}]$, then $A$ is simple and we are done.
\item Find the symmetric idempotents $f_{\omega}=\varphi(E_{\omega})$ and $1-f_{\omega}$ in $\End_{\QQ}(A)^s$ described in \eqref{varphi}.
\item Find symplectic bases for the lattices of the subvarieties $A_{\omega}:=\Ima(f_{\omega})$ and $A_{\omega}^c:=\Ima(1-f_{\omega})$, and their induced polarizations, using Algorithm 2.1.
\item Find the period matrices for $A_{\omega}$ and $A_{\omega}^c$ using Theorem \ref{T:matrizriemannchica}.
\item Repeat the procedure for these subvarieties, using the corresponding period matrices obtained in the previous step, until all the simple factors have been found.
\end{enumerate}
\end{thm}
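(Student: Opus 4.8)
The plan is to check that every step of the stated procedure is legitimized by the identifications collected in Subsection~\ref{SS:conocido} together with Theorem~\ref{T:matrizriemannchica}, and that the recursion in step~(6) terminates. The conceptual backbone is the classical correspondence (see \cite[Ch.~5]{bl}) between proper nonzero abelian subvarieties of $A$ and nontrivial symmetric idempotents of $\End_{\QQ}(A)$: to a symmetric idempotent $f \neq 0, 1$ one attaches the subvariety $\Ima(f)$, while $1-f$ cuts out a complementary subvariety $\Ima(1-f)$, and the sum morphism $\Ima(f)\times \Ima(1-f)\to A$ is an isogeny because $f+(1-f)=1$. Thus splitting a proper factor off $A$ amounts to exhibiting such an $f$, and $A$ is simple exactly when none exists.

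First I would explain how the forms $\omega$ of step~(1) parametrize candidate idempotents. Applying the inverse $\gamma^{-1}$ of the map \eqref{gamma} to such an $\omega$ yields a class $E_{\omega}=\gamma^{-1}(\omega)\in NS_{\QQ}(A)$, and $\varphi$ from \eqref{varphi} then sends it to a symmetric element $f_{\omega}=\varphi(E_{\omega})\in \End_{\QQ}(A)^s$. The content of \cite[Thm.~4.1]{alrJPAA} is that the constraints imposed on the $a_{ij}$ in step~(1) are exactly the conditions that $\omega$ be of type $(1,1)$, i.e.\ lie in $NS_{\QQ}(A)$ as in \eqref{eq3.2}, and that the resulting $f_{\omega}$ be idempotent of the prescribed analytic rank $n$. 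Since any proper factor and its complement have dimensions summing to $g$, one of them has dimension at most $g/2$; letting $n$ run over $\{1,\ldots,[\frac{g+1}{2}]\}$ therefore sweeps out every dimension the smaller factor can assume, so the failure to find a solution for all such $n$ is, by that theorem, equivalent to the simplicity of $A$. This justifies steps~(1)--(3).

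Granting the symmetric idempotent $f_{\omega}$, steps~(4)--(5) become a direct invocation of earlier results. Algorithm 2.1 produces symplectic bases and the induced polarization types for $A_{\omega}=\Ima(f_{\omega})$ and $A_{\omega}^c=\Ima(1-f_{\omega})$, via Remark~\ref{R:basechica} and the relation \eqref{eq:pol}; Theorem~\ref{T:matrizriemannchica} then returns their period matrices of the form $(D\;W)$ by solving the attached linear system for $W$. Termination of step~(6) follows by induction on dimension: since $f_{\omega}$ is a nontrivial idempotent we have $0<\dim A_{\omega},\dim A_{\omega}^c<g$, so every recursive call strictly lowers the dimension and the process halts. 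It halts precisely when each factor produced admits no admissible $\omega$, hence is simple, and the accumulated product is then the Poincar\'e decomposition of $A$.

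The step I expect to be the main obstacle is the faithful translation underlying steps~(1)--(2): one must be sure that the (generally nonlinear) system recorded in \cite[Thm.~4.1]{alrJPAA} encodes the full idempotency $f_{\omega}^2=f_{\omega}$, and not merely the symmetry of $f_{\omega}$, and that the analytic-rank bookkeeping makes the search over $n$ genuinely exhaustive. I would handle this by citing \cite[Thm.~4.1]{alrJPAA} for the equivalence of the criterion with simplicity, and by verifying, as in Corollary~\ref{p3.3}, that the quadratic relation among the $a_{ij}$ (for instance $a_{14}a_{23}-a_{13}a_{24}+a_{12}a_{34}=0$ when $g=2$) is exactly the idempotency condition transported through $\gamma$ and $\varphi$, the remaining linear relations fixing membership in $NS_{\QQ}(A)$ and the rank $n$.
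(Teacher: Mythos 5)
Your proposal is correct and follows essentially the same route as the paper's own (very terse) proof: steps (1)--(3) rest on the identifications of Subsection \ref{SS:conocido} and the criterion of \cite[Thm.~4.1]{alrJPAA}, step (4) on the complementary-subvariety correspondence (the paper cites \cite[Prop.~2.3]{lrrMathZ} where you cite \cite[Ch.~5]{bl}, the same standard fact), step (5) on Theorem \ref{T:matrizriemannchica}, and termination on the strict drop in dimension. Your write-up in fact supplies more detail than the paper does, notably the exhaustiveness of the search range $1 \leq n \leq [\frac{g+1}{2}]$ and the induction on dimension that guarantees the recursion halts.
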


\begin{proof}
Steps $(1),(2),(3)$ are straightforward from the theory exposed earlier. Step $(4)$ gives complementary subvarieties of $A$, according to \cite[Prop. 2.3]{lrrMathZ}.
Using Algorithm \ref{R:basechica}, one obtains  bases for both subvarieties. Since the period matrix for $A$ is given, using the coordinates of the bases in $(4)$ one computes the period matrices for these two subvarieties. Finally, this procedure stops because $A$ is of finite dimension.
\end{proof}

This Theorem also leads to an algorithm, which is included in the Appendix as Algorithm \ref{alg:beyond}. It allows us to decompose varieties without the knowledge of its endomorphism algebra, and without  considering a group action on them, or even without having one, provided its period matrix is known. We use it in the proof of Corollary \ref{P:coro} stated in the introduction, which illustrates how to apply our methods, see  Sections \ref{S:g101} and \ref{S:g11}.

\section{Application 1: A genus $101$ curve with completely decomposable Jacobian variety} \label{S:g101}

In this section we prove the first statement of Corollary \ref{P:coro} presented in the Introduction.

In \cite{PA}, there is an example of a curve $X$ of genus $101$ such that the GAD for its Jacobian variety $JX$ has the form $S\times E_1 \times E_2^2\times E_3^8\times \dots \times E_{14}^8 \to JX,$ 
where $E_1,  \ldots, E_{14}$ are elliptic curves and $S$ is an abelian surface. Since $101$ is an  Ekedahl-Serre gap, it is of interest to find out if $S$ decomposes further.

We apply the results  in this work to find a period matrix for $S$, and show that $S$ indeed decomposes further. Hence, by going beyond GAD, we show that $JX$ is completely decomposable.

\subsection{A Riemann matrix for $S$.}

Consider the group $G:= \langle a , b\rangle$, where
\begin{align*}
a& :=(1,16,6,11)(2,18,8,15,5,19,9,12)(3,20,10,14,4,17,7,13)  \,  , \\
b &:=  (1, 20)(2, 19, 4, 17, 5, 16, 3, 18)(6, 15)(7, 14, 9, 12, 10, 11, 8, 13). 
\end{align*}

Then $G$ is the group labeled as $(800,980)$ in the SmallGroup Database of \cite{magma}, and it acts on a curve  $X$ of genus $101$  with signature $(0;8,8,2)$ and monodromy $(a,b,ab)$.  We use the algorithm from [3] to find the symplectic representation  $\rho_r(G)$ of $G$ associated to this action; it is stored in \cite[Grupo800-980.mgm]{rrgithub}.

Using \cite{cr}, we identify that $S$ is isogenous to the Jacobian variety of $X/H$  for $H$ the unique (up to conjugacy) abelian subgroup of order $100$ of $G$. Therefore, $S$ corresponds to the image of $JX$ under the idempotent
$$p_H=\frac{1}{|H|}\sum_{h\in H}\rho_r(h).$$

We use Algorithm 2.1 to describe the embedding  $i_{p_H}:S\to JX$, and the induced polarization on $S$. Thus we obtain a symplectic basis $\beta_H$ of $S$ in the coordinates of the symplectic basis of $JX$ in which $\rho_r(G)$ is given; that is,   we have a matrix $\rho_r(i_{p_H})$ in $M_{4\times 202}(\mathbb{Z})$. Since  the induced polarization on $p_H(S)$ is of type $(10,10)$,  it is a ppav.

Now, we follow Algorithm 4.1 to find the rational representation $\rho_{r,S}$ of the restricted action of $G$ on $S$. Since $\rho_r(a),\rho_r(b) \in \Sp(202,\mathbb{Z})$, $\rho_{r,S}(a)$ and $\rho_{r,S}(b)$ are found by solving the linear systems

$$\rho_r(a)\cdot \rho_r(i_{p_H})  = \rho_r(i_{p_H}) \cdot \rho_{r,S}(a),\text{ and } \rho_r(b)\cdot \rho_r(i_{p_H}) = \rho_r(i_{p_H}) \cdot \rho_{r,S}(b).$$

We obtain

$$\rho_{r,S}(a)=\left(\begin{array}{r r r r} 0&0&1&1\\ 1&-1&-1&1\\ -1&0&1&0\\ 1&-1&-1&0\\ \end{array}\right)^t$$
and
$$\rho_{r,S}(b)=\left(\begin{array}{r r r r} -1&1&1&-1\\ 0&0&1&1\\ -1&1&0&-1\\ 0&-1&0&1\\ \end{array}\right)^t  \, ,$$
and  check that $\rho_{r,S}(a), \rho_{r,S}(b) \in \Sp(4,\mathbb{Z})$.  The Riemann matrix $Z_S\in \mathbb{H}_2$ fixed by these matrices is given by
$$Z_S=\left(\begin{array}{r r} \frac{1+i\sqrt{2}}{2} & -\frac{1}{2}\\ -\frac{1}{2} & \frac{1+i\sqrt{2}}{2} \\ \end{array}\right).$$

\subsection{Elliptic curves on $S$}

Since $S$ is a ppav, we consider the following period matrix for the abelian surface $S$

$$\Pi_S=\left(\begin{array}{c c c c} 1& 0 &  \frac{1+i\sqrt{2}}{2} & -\frac{1}{2}\\ 0& 1& -\frac{1}{2} & \frac{1+i\sqrt{2}}{2} \\ \end{array}\right),$$
and use Algorithm \ref{alg:beyond} to decompose $S$ further. For this period matrix we use Corollary \ref{p3.3}. Hence we look for a vector $(a_{12},a_{13},a_{14},a_{23},a_{24},a_{34}) \in \QQ^6$
satisfying

\begin{eqnarray*}
-1&=&a_{13}+a_{24},\\
0&=&\left(\frac{-1+i\sqrt{2}}{2}\right)a_{12}-\left(\frac{1+i\sqrt{2}}{2}\right)a_{14}-\frac{a_{13}}{2}+\frac{a_{24}}{2}+\left(\frac{1+i\sqrt{2}}{2}\right)a_{23}+a_{34} \\
0&=&a_{14}a_{23}-a_{13}a_{24}+a_{12}a_{34}.\\
\end{eqnarray*}

One solution is $(a_{12}=\frac{1}{2},a_{13}=-\frac{1}{2},a_{14}=\frac{1}{2},a_{23}=0,a_{24}=-\frac{1}{2},a_{34}=\frac{1}{2})$.

It corresponds to the form 

$$\omega=\frac{1}{2} dx_1\wedge dx_2 -\frac{1}{2} dx_1\wedge dx_3 + \frac{1}{2} dx_1\wedge dx_4-\frac{1}{2} dx_2\wedge dx_4+\frac{1}{2} dx_3\wedge dx_4.$$

The corresponding element in $NS_{\QQ}(S)$ is

$$E_{\omega}=\frac{1}{2}\left(\begin{array}{r r r r} 0&-1&1&-1\\ 1&0&0&1\\-1&0&0&-1\\1&-1&1&0\\ \end{array}\right).$$

The period matrix $\Pi_S$ of $S$ is given in a symplectic basis, therefore its polarization is given by the matrix

$$E_0=\left(\begin{array}{c c c c} 0&0&1&0\\ 0&0&0&1\\-1&0&0&0\\0&-1&0&0\\ \end{array}\right).$$
The corresponding idempotent is $f_{\omega}=E_0^{-1}E_{\omega}$, and its complement is $1-f_{\omega}$. We obtain the following idempotents
 
$$f_{\omega}=\frac{1}{2}\left(\begin{array}{r r r r}  1&0&0&1\\ -1&1&-1&0\\ 0&-1&1&-1\\  1&0&0&1\\ \end{array}\right).$$

$$1-f_{\omega}=\frac{1}{2}\left(\begin{array}{r r r r}  1&0&0&-1\\ 1&1&1&0\\0&1&1&1\\ -1&0&0&1\\ \end{array}\right).$$

Denote by $L_{\omega}$ and $L_{\omega}^C$ the lattice of $\Ima(f_{\omega})$ and its complement, respectively. $L_{\omega}\otimes \QQ$ is the pure lattice generated by the columns of 
$f_{\omega}$. Therefore we have as basis $\{u_1=(1,-1,0,1), u_2=(0,-1,1,0)\}$. Analogously, for $L_{\omega}^C$ we obtain $\{v_1=(1,1,0,-1), v_2=(0,1,1,0)\}$.

To obtain the period matrices of $\Ima(f_{\omega})$ and its complement, we need to translate from coordinates to elements of the lattice. For this we multiply $\Pi_S \alpha$ for $\alpha$ in the corresponding basis.

For instance for $\Ima(f_{\omega})$ we have

$$u_1=\left(\begin{array}{c c c c} 1& 0 &  \frac{1+i\sqrt{2}}{2} & -\frac{1}{2}\\ 0& 1& -\frac{1}{2} & \frac{1+i\sqrt{2}}{2} \\ \end{array}\right)\left(\begin{array}{r} 1\\ -1\\0\\1\\ \end{array}\right)=\left(\begin{array}{c} \frac{1}{2}\\ \frac{-1+i\sqrt{2}}{2}\\ \end{array}\right),$$
and 
$$u_2=\left(\begin{array}{c c c c} 1& 0 &  \frac{1+i\sqrt{2}}{2} & -\frac{1}{2}\\ 0& 1& -\frac{1}{2} & \frac{1+i\sqrt{2}}{2} \\ \end{array}\right)\left(\begin{array}{r} 0\\-1\\1\\0\\ \end{array}\right)=\left(\begin{array}{c}\frac{1+i\sqrt{2}}{2}\\ -\frac{3}{2}\\ \end{array}\right).$$
Since $(1+i\sqrt{2})u_1=u_2$, we have that $\Ima(f_{\omega})$ is the elliptic curve with lattice generated by $\{1,1+i\sqrt{2}\}$. Similarly, its  complementary abelian subvariety is the elliptic curve with lattice generated by  $\{1,\frac{1+i\sqrt{2}}{3}\}$. Therefore, we have the sum isogeny

$$s: E_{1+i\sqrt{2}}\times E_{\frac{1+i\sqrt{2}}{3}}\to S.$$

The matrix $P=(u_1,v_1,u_2,v_2)$ corresponds to the rational representation of $s$, which is  

$$P=\left(\begin{array}{r r r r} 1&1&0&0\\ -1& 1&-1&1\\ 0&0&1&1\\ 1&-1&0&0\\ \end{array}\right);$$
it has determinant $4$, which corresponds to the degree of $s$.

The Hurwitz's equation satisfied by $s$ is

$$\left(\begin{array}{c c} \frac{1}{2} & \frac{3}{2} \\  \frac{-1+i\sqrt{2}}{2} &  \frac{1-i\sqrt{2}}{2}\\ \end{array}\right) \left(\begin{array}{c c c c} 1& 0 & 1+i\sqrt{2} & 0\\ 0& 1& 0& \frac{1+i\sqrt{2}}{3} \\ \end{array}\right)=\left(\begin{array}{c c c c} 1& 0 &  \frac{1+i\sqrt{2}}{2} & -\frac{1}{2}\\ 0& 1& -\frac{1}{2} & \frac{1+i\sqrt{2}}{2} \\ \end{array}\right)P,$$
where the matrix corresponding to the analytic representation of $s$
is
$$\left(\begin{array}{c c}  \frac{1}{2} & \frac{3}{2} \\  \frac{-1+i\sqrt{2}}{2} &  \frac{1-i\sqrt{2}}{2}\\ \end{array}\right).$$

Summarizing, there is an isogeny

$$E_{1+i\sqrt{2}}\times E_{\frac{1+i\sqrt{2}}{3}}\times E_1\times  E_2^2\times E_3^8\times \dots \times E_{14}^8 \to JX,$$
finding in this way a completely decomposable Jacobian variety of dimension $101$ and filling up this Ekedahl-Serre gap.

\section{Application 2: A completely decomposable  Jacobian variety of dimension $11$}\label{S:g11}
 
In this section we prove the second claim in Corollary \ref{P:coro}. This is, we exhibit here a new example of a completely decomposable Jacobian of a curve of genus $11$  finding its Riemann matrix explicitly, hence proving it is of CM-type since each elliptic curves in its decomposition has complex multiplication. 

Let $G = \langle a, b\rangle$ be the group labeled as $(96,28)$ in the SmallGroup Database of \cite{magma}
with

\begin{tabular}{ll}
a &:=(1,48,23,28,6,44,19,27,2,46,24,29,4,45,20,25,3,47,22,30,5,43,2 1,26)\\
&(7,42,17,34,12,38,13,33,8,40,18,35,10,39,14,31,9,41,16,36,11,37,15,32),\\ \\
b& :=(1,34,10,25)(2,36,11,27)(3,35,12,26)(4,31,7,28)(5,33,8,30) (6,32,9,29)\\
& (13,43,22,40)(14,45,23,42)(15,44,24,41)(16,46,19,37)(17,48,20,39) (18,47,21,38).\\
\end{tabular}

\medskip

It acts on a curve $X$ of genus $11$ with signature $(0; 24,4,2)$ and monodromy $(a,b,ab)$. We use the algorithm in \cite{brr} to obtain the associated rational representation $\rho_r : G\to \Sp(22,\ZZ)$; it is stored in \cite[Grupo98-28.mgm]{rrgithub}. 

However, a direct application of Theorem  \ref{T:achica} to compute the Riemann matrix $Z \in \mathbb{H}_{11}$  for $JX$ by finding the fixed matrix under the action of $\rho_r(G)$ fails computationally; so we take the approach of computing the period matrices of its decomposition into simple factors.

The isotypical decomposition of the Jacobian variety $JX$ corresponds to the following (sum) isogeny :
\begin{equation}\label{GAD_ex}
s:  A_1 \times A_2 \times A_3 \times A_4 \times A_5\to JX,
\end{equation}
where the $A_j$ are the isotypical factors. Using Theorem \ref{T:achica}, actually the corresponding Algorithm coded in Magma in [32], we find period matrices for them.

\begin{equation}\label{isotipicosg11}
\begin{split}
\Pi_{A_1}& =\left(\begin{array}{r | r} 6 & 6i\end{array}\right),\\
\Pi_{A_2}& =\left(\begin{array}{r r | r r} 4 & 0 & 2i\sqrt{3} & -2\\  0& 4 & -2& 2i\sqrt{3}\\  \end{array}\right),\\
\Pi_{A_3}& =\left(\begin{array}{r r | r r} 4 & 0 & 8i & -12i \\  0& 12 & -12i &  24i \\  \end{array}\right),\\
\Pi_{A_4}& =\left(\begin{array}{r r | r r} 3 & 0 & \frac{3i\sqrt{2}}{2} & -3\\  0& 6 & -3 & 3i\sqrt{2}\\  \end{array}\right),\\
\Pi_{A_5}& =\left(\begin{array}{r r  r r | r r r r} 2& 0 &0&0& 2i\sqrt{6} & 0& 3 i\sqrt{6}  & - i\sqrt{6} \\  0&2 &0&0& 0 & 2i\sqrt{6}&  i\sqrt{6}  & -3 i\sqrt{6} \\ 
0&0 &6&0&  3i\sqrt{6} & i\sqrt{6}& 6 i\sqrt{6}  & -3 i\sqrt{6} \\ 0&0 &0&6&  -i\sqrt{6} & -3 i\sqrt{6}& -3 i\sqrt{6}  & 6 i\sqrt{6} \\ 
\end{array}\right).
\end{split}
\end{equation}

See a further description of the rational representation $\rho_r(s)$ of the isogeny $s$ in Remark \ref{R:GAD_ex}.

Moreover, since the monodromy of this action is known, by \cite{yoibero} we obtain that each isotypical factor decomposes further as 
$A_1\sim E_1, A_2\sim E_2^2, A_3\sim E_3^2, A_4\sim E_4^2$ and $ A_5\sim S^2$, with $E_j$ elliptic curves and $S$ an abelian surfce. Therefore a GAD for $JX$ is 

\begin{equation}\label{GAD_2}
 E_1\times E_2^2 \times E_3^2 \times E_4^2 \times S^2\to JX,
 \end{equation}

The geometry of this action allows us to say that every $E_j$ in this GAD is isogenous to a Jacobian variety $J(X/H_j)$ of some intermediate curve $X/H_j$ for specific $H_j\leq G$. So we use Theorem \ref{T:matrizriemannchica} (with $E_j =\Ima( p_{H_j})$) finding

$$\Pi_{E_1}=\left(\begin{array}{r | r} 6 & 6i\end{array}\right),$$

$$\Pi_{E_2}=\left(\begin{array}{r | r} 8 & 4+ 4i\sqrt{3}\end{array}\right),$$

$$\Pi_{E_3}=\left(\begin{array}{r | r} 8 & 8i\end{array}\right),$$

$$\Pi_{E_4}=\left(\begin{array}{r | r} 3 & \frac{3i\sqrt{2}}{2}\end{array}\right),$$

For $S$, we use that there is a subgroup $K\leq G$ such that $J(X/K)\sim E_4\times S$, hence $S$ corresponds to the image of the idempotent $f_S:=p_Ke_{5}$ where $e_{5}$ is the central idempotent corresponding to the isotypical factor $A_5$. We use  Theorem \ref{T:matrizriemannchica} applied to $f_S$ and find

$$\Pi_S=4\left(\begin{array}{c c c c} 1& 0 &\frac{3i\sqrt{6}}{2}& 2i\sqrt{6}\\ 0& 3& 2i\sqrt{6}& 3i\sqrt{6}\\ \end{array}\right).$$

We then apply Corollary \ref{p3.3} and  look for $(a_{12},a_{13},a_{14},a_{23},a_{24},a_{34}) \in \QQ^6$ such that

\begin{eqnarray*}
 0&=&-48a_{12} - \frac{9i\sqrt{6}}{2}a_{14} +2i\sqrt{6}(3a_{13} -a_{24}) + 3i\sqrt{6}a_{23} + 3a_{34}, \\
-3&=&3a_{13}+a_{24}, \; \textup{and} \\
0&=&a_{14}a_{23}-a_{13}a_{24}+a_{12}a_{34}.\\
\end{eqnarray*}

One solution is $(a_{12}=0,a_{13}=-1,a_{14}=\frac{-4}{3},a_{23}=0,a_{24}=0,a_{34}=0)$, which corresponds to $\omega=-dx_1\wedge dx_3-\frac{4}{3}dx_1\wedge dx_4$.
Now we follow the steps on Theorem \ref{beyond} to effectively decompose $S$ further,  as we did in Section \ref{S:g101}.

We obtain the period matrix of $f_{\omega}(S)$:  $( 1\, \,  \frac{i\sqrt{6}}{6})$, and hence the decomposition of $JX$ into simple factors is
\begin{equation}\label{GAD_ex2}
JX\sim E_i^3\times E_{\frac{i\sqrt{2}}{2}}^2\times E_{\frac{1+i\sqrt{3}}{2}}^2\times E_{\frac{i\sqrt{6}}{6}}^4 ;
\end{equation}
thus showing that $JX$ is of CM type. Notice that they are not isogenous elliptic curves, hence this is the Poincar\'e decomposition of $JX$.

Comparing \eqref{GAD_2} and \eqref{GAD_ex2}, we notice that the primitive factors decomposing the isotypical factors $A_1$ and $A_3$ turn out to be isogenous. As said, in the isotypical decomposition of $A$, $\textup{Hom}_G(A_i,A_j)=\{0\}$  but not necessarily $\textup{Hom}_G(A_i,A_j)=\{0\}$ for $i\neq j$. Besides, the primitive factor $S$ in the isotypical factor $A_5$ is not simple.

\begin{rem}\label{R:GAD_ex}
Finally, we point out two interesting facts about the isogeny $s$ on \eqref{GAD_ex}. First,  the determinant of $\rho_r(s)$ is the degree of the isogeny decomposition. In this case it is equal to $(3456)^2=11943936 = 2^{14} 3^6$.

Secondly,  $\rho_r(s)$ satisfies $\rho_r(s)^t \cdot J_E \cdot \rho_r(s)=J_{\textup{diag}}$, where $J_E$ corresponds to the principal polarization on $JX$
and $J_{\textup{diag}}$ collects all the induced polarizations on the isotypical factors in \eqref{GAD_ex} or \eqref{isotipicosg11}. So

$$J_{\textup{diag}}=\left(\begin{array}{r r} 0 & D\\ -D & 0\\ \end{array}\right),$$
with $D=\textup{diag}(6,4,4,4,12,3,6,2,2,6,6)$.
\end{rem}

\section{Appendix} \label{S:app}

In this section we outline the algorithm emerging from Theorem \ref{beyond}, which allows us to find the Poincar\'e decomposition of a pav. The code for this algorithm, and the others in this work, can be found in  \cite{rrgithub}. The reader can also find there  more precise explanations on how to actually implement them in Magma \cite{magma}, as well as the calculations for our applications.

\vskip12pt

\noindent
\framebox{\parbox{\boxwidth}{
\begin{customalg}{\ref{beyond}}\label{alg:beyond}
Decomposition of a pav $A$ into simple factors given its period matrix.\\ \hrule 

\vskip6pt 

{\bf Input:} The period matrix $\Pi = (E\, Z)$ of $A$, with $E=\textup{diag}(d_1,\dots, d_g)$ and $Z\in \HH_g$.

{\bf Output:} The period matrices of all the simple factors in the decomposition of $A$. 

{\bf Algorithm:} \begin{enumerate*}
\item  For a given $n \in \{1 , \ldots , [\frac{g+1}{2}]\}$, look for $\displaystyle \omega=\sum_{1 \leq i < j \leq 2g}a_{ij}dx_i \wedge dx_j, a_{ij}\in \QQ$, \\
\mbox{ }\hspace{1.1cm} satisfying all the conditions in Theorem 4.1 in \cite{alrJPAA}. 
\end{enumerate*}
\begin{enumerate}
\setcounter{enumi}{1}
\item If such an $\omega$ exists, find $E_{\omega}=\gamma^{-1}(\omega)\in NS_{\QQ}(A)$, see \eqref{gamma} and continue with (3) below. Otherwise, try with a different $n$. If there is no such $w$ for all $1\leq n \leq  [\frac{g+1}{2}]$, then $A$ is simple.
\item Find the symmetric idempotents $f_{\omega}=\varphi(E_{\omega})$ and $1-f_{\omega}$, see \eqref{varphi}.
\item Find symplectic bases for the lattices of the subvarieties $\Ima(f_{\omega})$ and $\Ima(1-f_{\omega})$, and their induced representations, using Algorithm 3.1.
\item Find the period matrices for $\Ima(f_{\omega})$ and $\Ima(1-f_{\omega})$ using Algorithm 4.1.
\item Repeat the algorithm for the period matrices obtained in the previous step until all the simple factors have been found.
\end{enumerate}
\end{customalg}
}}


\begin{thebibliography}{99}



\bibitem{alrJPAA}
\textsc{R.~Auffarth, H.~Lange, A.~M.~Rojas.}
\textit{A criterion for an abelian variety to be non-simple.}
J.~Pure Appl.~Algebra~\textbf{221} (2017), no.~8, 1906--1925.


\bibitem{beau}
\textsc{A.~Beauville.}
\textit{Some surfaces with maximal picard number.}
J.~\'Ec.~polytech.~Math.~\textbf{1} (2014), 101--116.

\bibitem{brr}
\textsc{A.~Behn, R.~E.~Rodr{\'{\i}}guez, A.~M.~Rojas.}
\textit{Adapted hyperbolic polygons and symplectic representations for group actions on {R}iemann surfaces.}
J.~Pure Appl.~Algebra \textbf{217} (2013), no.~3, 409--426.

\bibitem{bl}
\textsc{Ch.~Birkenhake, H.~Lange.}
\textit{Complex Abelian Varieties}, $2^{nd}$ edition,
Grundl.~Math.~Wiss. \textbf{302}, Springer, 2004.

\bibitem{magma}
\textsc{W.~Bosma, J.~Cannon, C. Playoust.}
\textit{The Magma algebra system. I. The user language.}
J.~Symbolic Comput. \textbf{24} (1997), no.~3-4, 235--265.


\bibitem{bcgr}
\textsc{E. Bujalance, A. F. Costa J. M. Gamboa and G. Riera,}
\textit{Period matrices of Accola-Maclachlan and Kulkarni surfaces.}
Ann.~Acad.~Sci.~Fenn.~Math. \textbf{25} (2000), no.~1, 161--177.


\bibitem{cr}
\textsc{A.~Carocca, R.~E.~Rodr\'iguez.}
\textit{Jacobians with group actions and rational idempotents.}
J.~Algebra \textbf{306} (2006), no.~2, 322--343.

\bibitem{cr2}
\textsc{A.~Carocca, R.~E.~Rodr\'iguez.} 
\textit{Hecke algebras acting on abelian varieties.} 
J.Pure. Appl. Algebra {\bf 222} (2018), 2626--2647.

\bibitem{dh}
\textsc{B.~Deconinck, M.~van Hoeij.}
\textit{Computing {R}iemann matrices of algebraic curves.}
Phys.~D \textbf{152/153} (2001), 28--46.


\bibitem{ekse}
\textsc{T.~Ekedahl, J.-P.~Serre.}
\textit{Exemples de courbes alg\'ebriques \`a jacobienne compl\`etement d\'ecomposable.}
C.~R.~Acad.~Sci.~Paris S\'er.~I Math.~\textbf{317} (1993), no.~5, 509--513.



\bibitem{gsst}
\textsc{P.~Gianni, M.~Sepp\"al\"a, R.~Silhol, B.~Trager.}
\textit{Riemann surfaces, plane algebraic curves and their period matrices.}
J.~Symbolic Comput. \textbf{26}, no. 6, (1998), 789--803.


\bibitem{l}
\textsc{S.~Lang.}
\textit{Introduction to Algebraic and Abelian Functions}, $2^{nd}$ edition:
Grundl.~Math.~Wiss. \textbf{302}, Springer, 2004.

\bibitem{lre}
\textsc{H.~Lange, S.~Recillas.}
\textit{ Abelian varieties with group action}.
 J.~reine angew.~Math. \textbf{575} (2004), 135-155.

\bibitem{lrb}
\textsc{H.~Lange, R.~E.~Rodr{\'{\i}}guez.}
\textit{Decomposition of Jacobians by Prym Varieties}, LNM  \textbf{2310}, Springer, 2022.
 
 
 \bibitem{lrrMathZ}
\textsc{H.~Lange, R.~E.~Rodr{\'{\i}}guez, A.~M.~Rojas.}
\textit{Polarizations on abelian subvarieties of principally polarized abelian varieties with dihedral group actions.}
Math.~Z. \textbf{276} (2014), no. 1-2, 397--420.

\bibitem{lroarch}
\textsc{H.~Lange and A.~M.~Rojas.}
\textit {Polarization of Isogeny factors of Jacobians with group action}. 
Arch.~Math. \textbf{98} (2012) no.~6, 513--526.

\bibitem{mascot}
\textsc{N.~Mascot and J.~Sijsling and J.~Voight.}
\textit {A Prym variety with everywhere good reduction over $\mathbb{Q}(\sqrt
   {61})$}. 
Arithmetic geometry, number theory, and computation (2021) Simons Symp. Springer, Cham, 559--581.




\bibitem{PA}
\textsc{J.~Paulhus, A.~M.~Rojas.}
\textit{Completely decomposable Jacobian varieties in new genera.}
Exp.~Math.~\textbf{26} (2017), no.~4, 430--445.




\bibitem{rr1}
\textsc{G.~Riera, R.~E.~Rodr\'iguez.}
\textit{The period matrix of {B}ring's curve.}
Pacific J.~Math. \textbf{154}, no.~1 (1992), 179--200.

\bibitem{rr2}
\textsc{G.~Riera, R.~E.~Rodr\'iguez.}
\textit{Riemann surfaces and abelian varieties with an automorphism of prime order.}
Duke Math.~J. \textbf{69}, no.~1 (1993), 199--217.


\bibitem{rrgithub}
\textsc{R.~E.~Rodr\'iguez, A. M. Rojas.}
\textit{Period Matrices (Magma code).} \\
{\tt https://github.com/rojas-ani/Period-matrices} 

\bibitem{yoibero}
\textsc{A.~M.~Rojas}.
\textit{Group actions on Jacobian varieties.}
Rev.~Mat.~Iberoam.~\textbf{23} (2007), 397--420.

\bibitem{serre2}
\textsc{J.-P.~Serre.}
\textit{Rational points on curves over finite fields}, Documents Math\'{e}matiques (Paris) [Mathematical Documents (Paris)] \textbf{18}, Soci\'{e}t\'{e} Math\'{e}matique de France, Paris, 2020.


\bibitem{sw}
\textsc{H.~Shiga, J.~Wolfart.}
\textit{Criteria for complex multiplication and transcendence properties of automorphic functions.}
J.~reine angew.~Math. \textbf{463} (1995), 1--25.




\bibitem{sage}
\textsc{W.~A.~Stein et al.}
\textit{Sage Mathematics Software (Version 6.7)}, The Sage Development Team, \texttt{http://www.sagemath.org}, 2015.
\end{thebibliography}
\end{document}